\newtheorem{teo}{Theorem}[section]
\newtheorem{lem}[teo]{Lemma}
\newtheorem{cor}[teo]{Corollary}
\theoremstyle{definition}
\DeclareMathOperator{\Z}{\mathbb{Z}}
\DeclareMathOperator{\diam}{\text{diam}}
\DeclareMathOperator{\olr}{\overleftrightarrow{\hspace{.3cm}}}
\DeclareMathOperator{\olra}{\overleftrightarrow{\hspace{.5cm}}}
\DeclareMathOperator{\olrc}{\overleftrightarrow{\hspace{.7cm}}}
\DeclareMathOperator{\la}{\langle}
\DeclareMathOperator{\ra}{\rangle}
\def\Z{{\mathbb{Z}}}
\def\diam{{\hbox{diam}}}
\def\olr{\overleftrightarrow{\hspace{.3cm}}}
\def\olra{\overleftrightarrow{\hspace{.5cm}}}
\def\olrc{\overleftrightarrow{\hspace{.7cm}}}
\def\src{{src^*}}
\def\rcn{rc^*}
\def\la{\langle}
\def\ra{\rangle}
\begin{document}

\centerline{\bf Rainbow connection in some digraphs}
\vskip 1pc
\centerline{Jes\'us Alva-Samos\footnote{
              Instituto de Matem\'aticas, UNAM}  \hskip 1pc
        Juan Jos\'e Montellano-Ballesteros\footnote{Instituto de Matem\'aticas, UNAM. 
                juancho@matem.unam.mx }
}
\date{Received: date / Accepted: date}

\begin{abstract}
An edge-coloured graph $G$ is {\it rainbow connected} if any two vertices are connected by a path whose edges have distinct colours. 
This concept  was introduced by Chartrand et al. in \cite{ch01}, and it was extended to oriented graphs by Dorbec et al. in \cite{DI}. 
 In this paper we present  some results regarding this extention, mostly for the case of circulant digraphs.

\end{abstract}

\noindent Keywords: arc-coloring;  rainbow connected; connectivity

\section{Introduction}

Given a connected graph $G=(V(G), E(G))$, an edge-coloring of $G$ is called {\it rainbow connected} if for every pair of distinct vertices $u, v$ of  $G$ there is a $uv$-path all whose edges received different colors. The {\it rainbow connectivity  number of} $G$ is the minimum number $rc(G)$ such that there is a rainbow connected  edge-coloring of $G$ with $rc(G)$  colors.  Similarily, an edge-coloring of $G$ is called {\it strong rainbow connected} if for every pair $u, v\in V(G)$ there is a $uv$-path of minimal length (a $uv$-geodesic) all whose edges received different colors. The {\it strong rainbow connectivity  number of} $G$ is the minimum number $src(G)$ such that there is a strong rainbow connected  edge-coloring of $G$ with $src(G)$  colors.

 The concepts of rainbow connectivity  and strong rainbow connectivity of a graph were introduced by Chartrand et al. in \cite{ch01} and, been the connectivity    one fundamental notion in Graph Theory, it is not surprising that several works around these concepts  has been done since then (see for instance \cite{cha,ch02,sc02,sc03,sc01,kri,li3,li4,li2}). For a survey in this topic see (\cite{li1}).     As a natural extension of this notions is that of the {\it rainbow connection} and {\it strong rainbow connection} in oriented graphs, which was introduced by Dorbec et al. in  \cite{DI}. 
 
   Let $D=(V(D), A(D))$ be a strong connected digraph and $\Gamma : A(D)\rightarrow \{1, \dots, k\}$ be an arc-coloring of $D$.  Given $x, y\in V(D)$, a directed $xy$-path $T$ in $D$ will be called {\it rainbow} if no two arcs of $T$ receive the same color.  $\Gamma$ will be called {\it rainbow connected}  if for every pair of vertices $x, y \in V(D)$ there is a rainbow $xy$-path and a rainbow $yx$-path. The {\it rainbow connection number of} $D$, denoted as $\rcn(D)$, is the minimum number $k$ such that there is a rainbow connected arc-coloring  of $D$ with  $k$ colors. Given a pair of vertices $x,y\in V(D)$, an $xy$-path $T$ will be called an {\it $xy$-geodesic}  if the length of $T$ is the distance, $d_D(x,y)$, from $x$ to $y$ in $D$.   An arc-coloring  of $D$ will be called {\it strongly rainbow connected} if for every pair of distinct vertices $x, y$ of $D$ there is a rainbow $xy$-geodesic and a rainbow $yx$-geodesic. The {\it strong rainbow connection number of} $D$, denoted as $\src(D)$, is the minimum number $k$ such that there is a strong rainbow connected arc-coloring  of $D$ with  $k$ colors.
 
In this paper we present some results regarding  this problem, mainly for the case of circulant digraphs.  For general concepts we may refer the reader to  \cite{bang}. 

\section{Some  remarks and  basic results on biorientations of graphs}

Let $D=(V(D), A(D))$ be a strong connected digraph of order $n$ and let $\diam(D)$ be the diameter  of $D$.  As we see in \cite{DI}, it follows that  $$\diam(D)\le \rcn(D)\le \src(D)\leq n.$$ 



Also, it is not hard to see that  if  $H$ is a strong spanning subdigraph of $D$, then $\rcn(D)\leq \rcn(H)$.  However, as in the graph case (see\cite{cha}), this is not true for the strong rainbow connection number, as we see in the next lemma. 

\begin{lem}\label{srcGH}
There is a digraph $D$ and a spanning subdigraph $H$ of $D$ such that $\src(D)>\src(H)$.
\end{lem}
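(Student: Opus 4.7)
The plan is to prove the lemma by constructing an explicit pair $(D,H)$, where $H$ is a strong spanning subdigraph of $D$ and a direct computation of both strong rainbow connection numbers yields $\src(D) > \src(H)$. The inequality is counterintuitive, since adding arcs can only weakly reduce distances and typically enriches the family of available geodesics between any two vertices; hence the example must be engineered so that passing from $H$ to $D$ produces rigid new geodesic constraints that increase the chromatic cost.

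I would proceed in three stages. First, select $H$ so that its geodesic structure is sufficiently redundant to allow an efficient strong rainbow coloring: several pairs of vertices should admit more than one geodesic, giving enough flexibility for a small number $k$ of colors to work. Verify $\src(H) \le k$ by exhibiting an explicit $k$-coloring in which, for every ordered pair, at least one geodesic is rainbow; and $\src(H) \ge k$ by observing either that $\diam(H)=k$ or that a specific diameter-attaining pair imposes $k$ distinct colors on the arcs of its unique geodesic.

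Second, obtain $D$ from $H$ by adjoining a small set of arcs chosen so that the resulting digraph has new (strictly shorter) geodesics which, taken together with the geodesics surviving from $H$, yield an over-determined system of color constraints. Concretely, one looks for a configuration in which a few arcs of $D$ simultaneously lie on several of the new shortest paths, forcing these arcs to take pairwise distinct colors and thereby raising the chromatic requirement above $k$.

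Third, verify $\src(D) > k$ by contradiction: assume a strong rainbow coloring of $D$ with $k$ colors exists, extract the color conditions forced by each new $D$-geodesic, and deduce that two arcs must receive the same color, destroying some geodesic that was supposed to be rainbow. The main obstacle is precisely this lower-bound step, since one must rule out every possible $k$-coloring; the hope is that the construction is symmetric enough that a short case analysis (or a pigeonhole-type argument on the overlapping family of new geodesics) suffices. An alternative route, if a direct digraph construction proves elusive, is to transport a known graph-theoretic example from \cite{cha} into the digraph setting by bi-orienting every edge, and then to verify that the strict inequality is preserved for the resulting biorientations.
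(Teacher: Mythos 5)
Your proposal correctly identifies the right strategy --- and it is in fact the strategy the paper uses: take a digraph $H$ with a cheap strong rainbow coloring, add a single well-chosen arc to get $D$, and show that the new arc creates a family of \emph{unique} geodesics whose overlapping arcs force more colors than $H$ needed. But as written this is a plan, not a proof. The lemma is a pure existence statement, so its entire mathematical content is the explicit example together with the two verifications, and you supply neither: there is no concrete digraph, no explicit coloring witnessing the upper bound for $H$, and no actual pigeonhole argument for the lower bound on $D$. You yourself flag the lower-bound step as ``the main obstacle'' and express ``hope'' that a short case analysis suffices --- that is precisely the part that cannot be waved at. For reference, the paper's example is a digraph $H$ on $14$ vertices with $\src(H)\le 6$ (certified by an explicit $6$-coloring), and $D=H+a_1a_2$; the added arc makes each path $u_iv_ia_1a_2u_4v_4$ the unique $u_iv_4$-geodesic, which forces the four arcs $u_iv_i$ to get four distinct colors and forces $a_1a_2,a_2u_4$ to take the remaining two, after which no legal color remains for $v_1a_1$. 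None of this structure can be recovered from your outline.

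A secondary concern: your fallback suggestion --- biorient a known graph example from \cite{cha} and ``verify that the strict inequality is preserved'' --- is not a safe escape hatch. The paper itself points out that $src$ and $\src$ of a graph and its biorientation can differ wildly (e.g.\ $src(K_{1,n})=n$ while $\src(\stackrel{\olrc}{K_{1,n}})=2$), because directed geodesics in a symmetric digraph have far more freedom than undirected ones. So the strict inequality from the undirected example does not transfer automatically, and that route would itself require a full new verification. To complete the proof you must commit to a specific $(D,H)$ and carry out both bounds.
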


\begin{proof}
Let $H$ be as in Figure \ref{fig1}, where $D$ is obtained from $H$ by adding the arc $a_1a_2$.  It is not hard to sse that the colouring in Figure  \ref{fig1} is a strong rainbow 6-coloring of $H$, thus  $\src(H)\leq 6$. We will show that $\src(D)\geq 7$.  Suppose there is a strong rainbow 6-coloring  $\rho$  of $D$, First notice that, for each $i$ and $j$,  the $u_iv_j$-geodesic is unique and contains the arcs $u_iv_i$ and $u_jv_j$, hence there are no two arcs of the type $u_iv_i$ sharing the same colour. Without loss of generality let $\rho(u_iv_i)=i$ for $1\leq i \leq 4$. By an analogous argument,  since $P_i=u_iv_ia_1a_2u_4v_4$ is the only $u_iv_4$-geodesic for $i\le3$, and $a_1a_2, a_2u_4\in A(P_i)$, we can suppose that  such arcs have colours 5 and 6,  respectively. If we assign any of the six colours to the arc $v_1a_1$, we see that for some $j\geq 2$ the unique $u_1v_j$-geodesic is no rainbow, contradicting the choice of $\rho$. Hence $\src(G)\ge7$ and the result follows.
\end{proof}

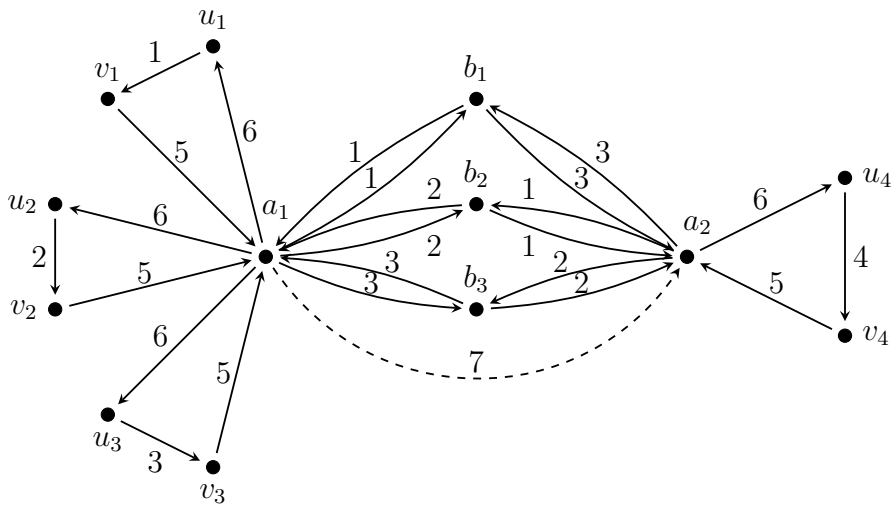
\begin{figure}[ht] 
\begin{center}
\begin{tikzpicture}[every circle node/.style ={circle,draw,fill=black,minimum size= 5pt,inner sep=0pt, outer sep=0pt},
every rectangle node/.style ={}];

\begin{scope}[scale=0.7, xshift=-5cm]
\node [circle] (0) at (-3,4)[label=90:$u_1$]{};
\node [circle] (1) at (-5,3)[label=90:$v_1$]{};

\node [circle] (2) at (-6,1)[label=180:$u_2$]{};
\node [circle] (3) at (-6,-1)[label=180:$v_2$]{};

\node [circle] (4) at (-5,-3)[label=270:$u_3$]{};
\node [circle] (5) at (-3,-4)[label=270:$v_3$]{};

\node [circle] (6) at (-2,0)[]{};
\node [rectangle] (a1) at (-1.8,0.9) {$a_1$};
\node [circle] (7) at (2,3)[label=90:$b_1$]{};
\node [circle] (8) at (2,1)[label=90:$b_2$]{};
\node [circle] (9) at (2,-1)[label=90:$b_3$]{};
\node [circle] (10) at (6,0)[]{};
\node [rectangle] (a2) at (6.2,0.6) {$a_2$};

\node [circle] (11) at (9,1.5)[label=0:$u_4$]{};
\node [circle] (12) at (9,-1.5)[label=0:$v_4$]{};

\foreach \from/\to in {0/1}
\draw [->, shorten <=3pt, shorten >=3pt, >=stealth, line width=.7pt] (\from) to (\to);
\foreach \from/\to in {2/3}
\draw [->, shorten <=3pt, shorten >=3pt, >=stealth, line width=.7pt] (\from) to (\to);
\foreach \from/\to in {4/5}
\draw [->, shorten <=3pt, shorten >=3pt, >=stealth, line width=.7pt] (\from) to (\to);
\foreach \from/\to in {11/12}
\draw [->, shorten <=3pt, shorten >=3pt, >=stealth, line width=.7pt] (\from) to (\to);
\foreach \from/\to in {1/6,3/6,5/6,12/10}
\draw [->, shorten <=3pt, shorten >=3pt, >=stealth, line width=.7pt] (\from) to (\to);
\foreach \from/\to in {6/0,6/2,6/4,10/11}
\draw [->, shorten <=3pt, shorten >=3pt, >=stealth, line width=.7pt] (\from) to (\to);

\foreach \from/\to in {6/7,7/6,10/8,8/10}
\draw [->, shorten <=3pt, shorten >=3pt, >=stealth, line width=.7pt] (\from) to [bend right=10] (\to);
\foreach \from/\to in {6/8,8/6,10/9,9/10}
\draw [->, shorten <=3pt, shorten >=3pt, >=stealth, line width=.7pt] (\from) to [bend right=10] (\to);
\foreach \from/\to in {6/9,9/6,10/7,7/10}
\draw [->, shorten <=3pt, shorten >=3pt, >=stealth, line width=.7pt] (\from) to [bend right=10] (\to);

\draw[->, shorten <=5pt, shorten >=5pt, >=stealth, line width=.7pt, black, dashed] (-2,0) .. controls (0,-3) and (4,-3) .. (6, 0);

\node [rectangle] () at (-4.1,3.9) {1}; \node [rectangle] () at (-0.3,2) {1}; \node [rectangle] () at (0,1.5) {1};
\node [rectangle] () at (3,0.2) {1}; \node [rectangle] () at (3,1.3) {1};
\node [rectangle] () at (-6.3,0) {2}; \node [rectangle] () at (1.2,0.2) {2}; \node [rectangle] () at (1.2,1.3) {2};
\node [rectangle] () at (3.6,-0.1) {2}; \node [rectangle] () at (4,-0.5) {2};
\node [rectangle] () at (-4.1,-3.9) {3}; \node [rectangle] () at (0.4,-0.1) {3}; \node [rectangle] () at (0,-0.5) {3};
\node [rectangle] () at (4,1.5) {3}; \node [rectangle] () at (4.4,2) {3};
\node [rectangle] () at (9.3,0) {4};
\node [rectangle] () at (-3.6,2) {5}; \node [rectangle] () at (-4.3,-0.3) {5}; \node [rectangle] () at (-2.8,-2.2) {5};
\node [rectangle] () at (7.7,-0.5) {5}; 
\node [rectangle] () at (-2.3,2.4) {6}; \node [rectangle] () at (-4,0.8) {6}; \node [rectangle] () at (-4,-1.5) {6};
\node [rectangle] () at (7.4,1.1) {6}; 
\node [rectangle] () at (2,-2) {7};

\end{scope}

\end{tikzpicture}
\end{center}\caption{The digraphs $D$ and $H$ from Lemma \ref{srcGH}.}\label{fig1}
\end{figure}



Given a pair $v, u\in V(D)$,  if the arcs $uv$ and $vu$ are in $D$, then we say that $uv$ and $vu$ are \emph{symmetric} arcs. When every arc of $D$ is symmetric, $D$ is called a \emph{symmetric} digraph. 
Given a graph $G=(V(G), E(G))$, its {\it biorientation} is the symmetric digraph   $\stackrel{\olr}{G}$ obtained from $G$ by replacing 
each edge $uv$ of $G$ by the pair of symmetric arcs $uv$ and $vu$. 

Given a graph $G$ and  a (strong) rainbow connected edge-coloring of  $G$, it is not hard to see that the arc-coloring  of   $\stackrel{\olr}{G}$, obtained by assign the color of the edge $uv$ to both arcs $uv$ and $vu$  is a (strong) rainbow connected arc-coloring of  $\stackrel{\olr}{G}$. Thus $\rcn(\stackrel{\olr}{G})\le rc(G)$ and $\src(\stackrel{\olr}{G})\le src(G)$. Although for some graphs and its biorientations these values coincide (for instance, as we will see, for $n\geq 4$,   $rc(C_n) = src(C_n)= \rcn(\stackrel{\olra}{C_n})=\src(\stackrel{\olra}{C_n})$),   for other graphs and its biorientations the difference between those values is unbounded, as we see in the case of the stars, where for each $n\geq 2$, $rc(K_{1,n})=n$ (for each path between terminal vertices we need two colors)  and $\rcn(\stackrel{\olrc}{K_{1,n}})=\src(\stackrel{\olrc}{K_{1,n}})=2$ (the colouring that assigns color 1 to the in-arcs of the ``central''
  vertex and assigns color 2 to the ex-arcs of the central vertex is a strong rainbow coloring).

\begin{teo}\label{srcKn}
Let $D$ be a nontrivial digraph, then
\begin{enumerate}
	\item[(a)] $\src(D)=1$ if and only if $\rcn(D)=1$ if and only if, for some $n\geq 2$,  $D= \stackrel{\olr}{K_n}	$;
	\item[(b)] $\rcn(D)=2$ if and only if $\src(D)=2$.
\end{enumerate}
\end{teo}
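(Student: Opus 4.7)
The plan for both parts rests on the inequality chain $\diam(D)\le \rcn(D)\le \src(D)$ noted at the start of this section, together with the elementary observation that with only two available colors, any rainbow directed path has length at most $2$.

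For part (a), I would first dispatch the easy direction: on $\stackrel{\olr}{K_n}$, assign color $1$ to every arc. Between any pair $x,y$ of distinct vertices, the arc $xy$ is itself a rainbow $xy$-geodesic, so $\src(\stackrel{\olr}{K_n})=1$ and hence $\rcn(\stackrel{\olr}{K_n})=1$ as well. For the converse, I would invoke the inequality chain: if either $\src(D)=1$ or $\rcn(D)=1$, then $\diam(D)\le 1$. Since $D$ is strongly connected and nontrivial, this forces every ordered pair of distinct vertices to be joined by an arc, which is exactly the definition of $\stackrel{\olr}{K_n}$ for $n=|V(D)|\geq 2$.

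For part (b), the direction $\src(D)=2\Rightarrow\rcn(D)=2$ is immediate from $\rcn(D)\le \src(D)=2$ together with part (a), which rules out $\rcn(D)=1$ (as that would force $\src(D)=1$). The interesting direction is $\rcn(D)=2\Rightarrow\src(D)=2$. The plan is to show that \emph{any} rainbow $2$-coloring $\Gamma$ witnessing $\rcn(D)=2$ is already strongly rainbow connected. Since $\rcn(D)=2$, we have $\diam(D)\le 2$. For a pair $x,y$ with $d_D(x,y)=1$, the arc $xy$ is a rainbow $xy$-geodesic. For a pair with $d_D(x,y)=2$, the rainbow $xy$-path guaranteed by $\Gamma$ has length at least $2$ (the distance) and at most $2$ (since only two colors are used and all arcs of a rainbow path have distinct colors), so it is a rainbow $xy$-geodesic. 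Thus $\Gamma$ is strongly rainbow connected, giving $\src(D)\le 2$, and combining with $\src(D)\ge \rcn(D)=2$ yields equality.

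I do not anticipate any real obstacle: both parts are essentially a careful unpacking of the definitions in light of the diameter bound. The only place to be slightly careful is the final step of (b), where one must verify both the ``at least $2$'' and ``at most $2$'' length bounds on the rainbow path to conclude it is a geodesic.
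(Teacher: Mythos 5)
Your proposal is correct and follows essentially the same route as the paper: part (a) via the diameter bound $\diam(D)\le \rcn(D)\le \src(D)$ and the observation that any $1$-colouring of $\stackrel{\olr}{K_n}$ is strongly rainbow, and part (b) via the key point that a rainbow path in a $2$-colouring has length at most $2$, so for pairs at distance $2$ the guaranteed rainbow path is automatically a geodesic. You are in fact slightly more explicit than the paper about why that path has length exactly $2$, but the argument is the same.
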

\begin{proof} First observe that since $D$ is nontrivial, $\rcn(D)\geq 1$ and therefore if $\src(D)=1$ then $\rcn(D)=1$. If $\rcn(D)= 1$ then $\diam(D) =1$ and hence $D=\stackrel{\olr}{K_n}$ for some $n\geq 2$.  On the other hand, if $D= \stackrel{\olr}{K_n}$ it follows that every 1-colouring of $D$ is a strong rainbow colouring. Thus $1\geq \src(D)\geq \rcn(D)\geq 1$ and (a) follows.  For (b), if $\src(D) = 2$, by (a), $\rcn(D)>1$ and hence $\rcn(D) = 2$.  If $\rcn(D) = 2$,   $D$ has a 2-rainbow colouring and, by (a) $D\not=\stackrel{\olr}{K_n}$. Therefore for every pair $u, v\in V(D)$, with $d(u,v)\geq 2$,  exists a $uv$-rainbow path of lenght 2, which is also geodesic. Hence $\src(D)=2$ and (b) folows. \end{proof}

\begin{teo}\label{srcCn} 
\begin{enumerate} 
	\item[(a)] For $n\geq 2$,  $\rcn(\stackrel{\olra}{P_n})=\src(\stackrel{\olra}{P_n})=n-1$; 
	\item[(b)] For $n\geq 4$, $\rcn(\stackrel{\olra}{C_n})=\src(\stackrel{\olra}{C_n})=\lceil n/2\rceil$
\item[(c)]  Let $k\ge2$, if $\stackrel{\olr}{K}_{n_1,n_2,\dots,n_k}$ is the complete $k$-partite digraph where $n_i\ge2$ for some $i$, then $\rcn(\stackrel{\olr}{K}_{n_1,n_2,\dots,n_k})=\src(\stackrel{\olr}{K}_{n_1,n_2,\dots,n_k})=2$.
\end{enumerate}
\end{teo}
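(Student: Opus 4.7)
The three parts share a template: exhibit an explicit coloring, check it is strongly rainbow, and match it with a lower bound coming either from the inequality $\diam(D)\le \rcn(D)\le \src(D)$ or from a structural pigeonhole.

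For (a), label $V(P_n)=\{v_1,\dots,v_n\}$. In $\stackrel{\olra}{P_n}$ the directed $v_iv_j$-path is unique and coincides with the $v_iv_j$-geodesic, so for every pair the rainbow and strong rainbow conditions are equivalent. The $v_1v_n$-path has length $n-1$, which gives $\rcn\ge n-1$, and coloring each pair of symmetric arcs between $v_i$ and $v_{i+1}$ with color $i$ makes every directed path read off a strictly monotone sequence of colors, hence rainbow.

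For (b) write $c=\lceil n/2\rceil$ and label the vertices of $C_n$ cyclically as $v_0,\dots,v_{n-1}$ with $a_i=v_iv_{i+1}$ and $b_i=v_{i+1}v_i$ (indices mod $n$). I would assign $\Gamma(a_i)=\Gamma(b_i)=i\bmod c$. Any two vertices are joined by exactly two directed paths, of lengths $d$ and $n-d$, and $\min(d,n-d)\le \lfloor n/2\rfloor\le c$; the shorter of the two uses at most $c$ consecutive forward or backward arcs, whose colors are consecutive residues modulo $c$ and therefore all distinct, yielding a rainbow geodesic. For the lower bound, $n$ even gives $\diam=n/2=c$. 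The odd case $n=2k+1$ (for which $c=k+1>\diam=k$) is the main difficulty and I would argue by contradiction: assume a rainbow $k$-coloring. For every $i$ the pair $(v_i,v_{i+k})$ has clockwise distance $k$ and counterclockwise distance $k+1>k$, so its rainbow path must be the clockwise one of length $k$, forcing $\{a_i,\dots,a_{i+k-1}\}$ to display all $k$ colors. Sliding $i\mapsto i+1$ and comparing the two windows forces $\Gamma(a_{i+k})=\Gamma(a_i)$ for every $i$; since $\gcd(k,2k+1)=1$, iterating makes all forward arcs monochromatic, contradicting the requirement that every $k$ consecutive of them exhibit $k\ge 2$ distinct colors (the hypothesis $n\ge 4$ guarantees $k\ge 2$ in this case).

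For (c), two vertices in a part $V_i$ of cardinality $\ge 2$ lie at distance $2$, so $\rcn\ge \diam\ge 2$. For the upper bound, fix the part $V_1$ and color an arc $uv$ with color $1$ if $u\in V_1$ and with color $2$ otherwise. Two vertices in distinct parts are joined by a single arc, trivially a rainbow geodesic. Two vertices $u,v$ sharing a part $V_j$ are at distance $2$: if $j=1$, any $w\in V_2$ makes the path $u\to w\to v$ read $(1,2)$; if $j\ge 2$, any $w\in V_1$ makes it read $(2,1)$. Hence $\src\le 2$, and Theorem~\ref{srcKn}(b) yields $\rcn=\src=2$.
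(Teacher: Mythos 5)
Your architecture parallels the paper's where it matters most: the lower bounds come from the diameter, and for odd cycles you use exactly the paper's sliding-window pigeonhole (every $v_iv_{i+k}$-pair forces all $k$ colours onto the unique short path, hence $\Gamma(a_{i+k})=\Gamma(a_i)$, and $\gcd(k,2k+1)=1$ propagates this to a monochromatic contradiction). Where you genuinely diverge is the upper bounds in (a) and (b): the paper cites $src(P_n)=n-1$ and $src(C_n)=\lceil n/2\rceil$ from \cite{ch01} and transfers them via $\src(\stackrel{\olra}{G})\le src(G)$, while you construct explicit colourings, which is more self-contained. Parts (a) and (c) are correct; in (c) your colouring (by whether the tail lies in $V_1$) differs from the paper's (comparing the indices of the parts of the two endpoints), but both verify immediately, and your closing appeal to Theorem~\ref{srcKn}(b) is not even needed since you already have $2\le\rcn\le\src\le 2$.

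The one step that fails as written is the verification of $\Gamma(a_i)=i\bmod c$ in (b) for odd $n=2k+1$, $c=k+1$. Since $c\nmid n$, the colours are \emph{not} consecutive residues across the seam between $a_{n-1}$ and $a_0$: they run $\dots,k-2,k-1,0,1,\dots$, skipping $k$. Consequently a window of $c$ consecutive forward arcs through that seam can repeat a colour (for $n=5$ the arcs $a_3,a_4,a_0$ receive $0,1,0$), so the claim ``at most $c$ consecutive arcs, whose colours are consecutive residues modulo $c$ and therefore all distinct'' is false in the odd case. The colouring itself is still strongly rainbow, but you must use the sharper bound that odd-cycle geodesics have length at most $\lfloor n/2\rfloor=k=c-1$ and then check directly that any $k$ consecutive arcs, including those crossing the seam, get distinct colours (the two colour runs $j-(k+1),\dots,k-1$ and $0,\dots,j-k-2$ do not collide). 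Either make that case explicit, or sidestep it as the paper does by inheriting the colouring from the undirected cycle.
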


\begin{proof}  In  \cite{ch01} it is shown that for every $n\geq 4$,  $src(C_n)=\lceil \frac{n}{2}\rceil$ and for every $n\geq 2$, $src(P_n) = n-1$.  Since $\diam(\stackrel{\olra}{P_n})= n-1$ it follows that $n-1\leq \rcn(\stackrel{\olra}{P_n})\leq \src(\stackrel{\olra}{P_n})\leq src(P_n) = n-1$ and the first part of the theorem follows. In an analogous way, if $n$ is even, $\lceil \frac{n}{2}\rceil=\diam(\stackrel{\olra}{C_n})\leq \rcn(\stackrel{\olra}{C_n})$ and since $\src(\stackrel{\olra}{C_n})\le src(C_n)=\lceil \frac{n}{2}\rceil$, $\rcn(\stackrel{\olra}{C_n})=\src(\stackrel{\olra}{C_n})=\lceil \frac{n}{2}\rceil$.  Let  $n =2k+1$ with $k\ge2$ and let us suppose there is  a rainbow $k$-colouring  $\rho$ of $\stackrel{\olra}{C_n}$.  Observe that for every $0\leq i \leq n-1$, $(v_i, v_{i+1}, \dots v_{i+k})$ is the only $v_iv_{i+k}$-path of length  $d(v_i, v_{i+k}) = k$ in $\stackrel{\olra}{C_n}$ and therefore  the $k$ colours of $\rho$ occurs in each of such geodesic paths. Thus $\rho(v_iv_{i+1}) = \rho(v_{i+k}v_{i+k+1})$  for each $0\leq i \leq n-1$, which, since $(k, n=2k+1)=1$ implies that all the arcs $v_iv_{i+1}$ in $\stackrel{\olra}{C_n}$ receive the same color which is a contradiction.  Thus $\rcn(\stackrel{\olra}{C_n}) \geq k +1 = \lceil \frac{n}{2}\rceil$ and (b) follows. For (c), since  $n_i\ge2$ for some $i$, then $\stackrel{\olr}{K}_{n_1,n_2,\dots,n_k}$ is not a complete digraph, hence $\rcn(\stackrel{\olr}{K}_{n_1,n_2,\dots,n_k})\ge2$. Let $V_1,V_2,\dots,V_k$ be the $k$-partition on independent sets of $V(\stackrel{\olr}{K}_{n_1,n_2,\dots,n_k})$, and for each arc $uv$, with $u\in V_i$ and $v\in V_j$, assign color 1 to $uv$ if $i<j$ and color 2 if $i>j$. Since $\diam(\stackrel{\olr}{K}_{n_1,n_2,\dots,n_k})=2$, it is not hard to see that this is a strong rainbow connected 2-coloring and therefore $\src(\stackrel{\olr}{K}_{n_1,n_2,\dots,n_k})\le2$. 
\end{proof}

\begin{teo}\label{teo3} Let $D$ be a spanning strong connected subdigraph of $\stackrel{\olra}{C_n}$ with $k \geq 1$ asymmetric arcs.  Thus
$$\rcn(D)=\left\{\begin{array}{ll} n-1 & \text{if }k\leq2\text{;} \\ n & \text{if }k\geq3\text{.}\end{array}\right.$$
Moreover, if $k\geq 3$, $\rcn(D)= \src(D) = n$. 
\end{teo}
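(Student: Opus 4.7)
The plan is to extract the combinatorial skeleton of $D$, obtain the lower bound from the diameter and from forced long paths, and match it with an explicit coloring; the threshold at $k=3$ will reflect a rigidity in the forced paths that only collapses when a third asymmetric arc is added.

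First I would show that if $D$ is strongly connected and a spanning subdigraph of $\stackrel{\olra}{C_n}$ with $k \geq 1$ asymmetric arcs, then all its asymmetric arcs point in the same cyclic sense; say, clockwise. Indeed, if two asymmetric arcs of opposite orientation were cyclically adjacent, at positions $a$ (carrying only $v_a \to v_{a+1}$) and $b$ (carrying only $v_{b+1} \to v_b$), then every vertex in the segment $v_{a+1}, \dots, v_b$ would have all its out-arcs trapped inside the segment, contradicting strong connectivity. Write $S = \{i_1, \dots, i_k\}$ for the positions carrying only the clockwise arc. For each $i \in S$, the pair $(v_{i+1}, v_i)$ admits a unique directed path, namely the full clockwise round of length $n-1$, so $\diam(D) = n-1$.

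For $k \leq 2$ the lower bound $\rcn(D) \geq \diam(D) = n-1$ is immediate. For the matching upper bound I would exhibit an explicit $(n-1)$-coloring. For $k = 1$ with $S = \{0\}$, color position $i$ with color $i$ for $1 \leq i \leq n-1$ and color the single arc at position $0$ also with color $1$. For $k = 2$ with $S = \{0, p\}$, color positions in $\{1, \dots, n-1\} \setminus \{p\}$ bijectively with the colors $1, 2, \dots, n-2$ and give both asymmetric arcs color $n-1$. In each case the one or two forced $(n-1)$-paths attain every color; for any other pair $(v_a, v_b)$ the only way the clockwise path fails to be rainbow is if it contains both positions carrying the one repeated color, but then the complementary counterclockwise path traverses only symmetric positions and is automatically rainbow.

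For $k \geq 3$ the central point is the lower bound $\rcn(D) \geq n$. Pick three asymmetric positions $i_1, i_2, i_3$. Each forces the clockwise arcs at the $n-1$ positions different from $i_j$ to receive, in an $(n-1)$-coloring, all colors exactly once. Combining the constraints from $i_1$ and $i_2$ pins down the clockwise arcs at $i_1$ and $i_2$ to both realize the unique color missing from the common positions $\{0, \dots, n-1\} \setminus \{i_1, i_2\}$, hence $c(i_1) = c(i_2)$; but the constraint from $i_3$ requires $c(i_1) \neq c(i_2)$, a contradiction. The matching upper bound is attained by assigning to every arc at position $i$ the color $i$: any monotone walk in $D$ is rainbow. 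Since every geodesic in $D$ is a monotone arc of $C_n$, the same coloring is strongly rainbow, giving $\src(D) \leq n$; combined with the trivial $\src(D) \geq \rcn(D) = n$, the moreover clause follows. The principal obstacle is expected to be the upper bound for $k = 2$: one must simultaneously accommodate two rigid $(n-1)$-paths with only $n-1$ colors, which essentially pins down the coloring, and then rainbow connectivity for every remaining ordered pair needs a short but complete case analysis of the clockwise versus counterclockwise alternative.
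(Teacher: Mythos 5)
Your proposal is correct, and its overall skeleton (diameter lower bound, explicit colourings for the upper bounds, forced unique geodesics driving the $k\ge 3$ lower bound) matches the paper's, but the decisive step is argued by a genuinely different mechanism. For $k\ge 3$ the paper applies pigeonhole to the $n$ clockwise arcs of the spanning directed Hamiltonian cycle: under an $(n-1)$-colouring two of them, at positions $i$ and $j$, share a colour, so the required rainbow $v_iv_{j+1}$- and $v_jv_{i+1}$-paths must be the counterclockwise ones; these must therefore lie in $D$, which forces every position other than $i,j$ to be symmetric and hence $k\le 2$. You instead take three asymmetric positions $i_1,i_2,i_3$ and note that each forced geodesic of length $n-1$ makes the clockwise arcs at the positions other than $i_j$ pairwise distinctly coloured; the constraints from $i_1$ and $i_2$ then force $c(i_1)=c(i_2)$, while the constraint from $i_3$ forbids it. Your version is a direct colour-counting contradiction that needs no structural conclusion about which reverse arcs exist, and is arguably cleaner; the paper's version extracts more structure (it pins down exactly which two positions may be asymmetric). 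Two further, minor divergences: for $k=1$ the paper simply observes that $\stackrel{\olra}{P_n}$ is a spanning strong subdigraph and invokes Theorem \ref{srcCn}(a), where you give an explicit colouring (which does work, since the bioriented part of $D$ already realizes all rainbow paths); and you make explicit, via the trapped-segment argument, that all asymmetric arcs have the same orientation, a fact the paper obtains implicitly from strong connectivity forcing the full clockwise Hamiltonian path once one reverse arc is missing. Your $k=2$ colouring (both asymmetric arcs receive the single repeated colour) and its verification coincide with the paper's.
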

\begin{proof}  Let $V(\stackrel{\olra}{C_n}) = \{v_0, \dots , v_{n-1}\}$ and  suppose $v_0v_{n-1}\not\in A(D)$. Since $D$ is strong connected the $v_0v_{n-1}$-path $T= (v_0, v_1, \dots , v_{n-1})$ is contained in $D$, thus $\diam(D)\geq n-1$. Therefore,  $n-1\leq \rcn(D)\leq n$.   If $k=1$ we see that $\stackrel{\olra}{P_n}$ is a spanning subdigraph of $D$, hence $n-1\leq \rcn(D)\leq \rcn(\stackrel{\olra}{P_n})$, which by Theorem \ref{srcCn} (a) implies that $\rcn(D)=n-1$.  Let $k\geq 2$.  If $v_{n-1}v_0\not\in A(D)$, since $D$ is strong connected it follows that $D$ is isomorphic to $\stackrel{\olra}{P_n}$  which have no asymmetric arcs and thus this is not possible.  Therefore  $v_{n-1}v_0\in A(D)$.   If  there is a $(n-1)$-rainbow coloring $\rho$ of $D$,  since  $v_{n-1}v_0\in A(D)$,  the directed cycle $C$ induced by $A(T)\cup v_{n-1}v_0$ is a spanning subdigraph of $D$ and therefore there are two arcs $v_iv_{i+1}, v_jv_{j+1}\in A(C)$ such that $\rho(v_iv_{i+1}) = \rho(v_jv_{j+1})$. Since $\rho$ is a rainbow coloring, there is a rainbow $v_iv_{j+1}$-path and a rainbow $v_jv_{i+1}$-path in $D$. Thus the paths $(v_i, v_{i-1}, \dots, v_{j+2}, v_{j+1})$ and $(v_j, v_{j-1}, \dots, v_{i+2}, v_{i+1})$ most be contained in $D$ and therefore the number of assymetric arcs in $D$ is at most 2. Thus, if $k\geq 3$ then $\rcn(D)\geq n$ and hence, $\rcn(D) = n$. Finally,  if $k=2$, let $\rho$ be the $(n-1)$-arc coloring of $D$ which assigns the same color to the assymetric arcs, and for the remaining $n-2$ pairs of simmetric arcs and the remaining $n-2$ colors,  $\rho$ assigns the same color to each pair of simmetric arcs. It is not hard to see that $\rho$ is a rainbow coloring of $D$, thus $\rcn(D)\leq n-1$ and the first part of the theorem follows. The second part is directly from the first part of the theorem and from the fact that $\src(D)\leq n$.  \end{proof}

As a direct corollary of the previous result we have

\begin{cor}  Let $D$ be a strong connected digraph with $m\geq 3$ arcs. Thus $\rcn(D)= \src(D)=m$ if and only if $D = \stackrel{\rightarrow}{C_m} $.\end{cor}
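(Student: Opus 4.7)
The plan is to derive the corollary as a short consequence of Theorem \ref{teo3} together with the two general bounds already available: the basic inequality $\rcn(D)\le\src(D)\le n$ from Section 2, and the elementary observation that a strongly connected digraph on $n$ vertices has at least $n$ arcs.

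For the ``if'' direction, I would simply verify that $\stackrel{\rightarrow}{C_m}$ fits into the hypotheses of Theorem \ref{teo3}. Indeed, if we regard $\stackrel{\rightarrow}{C_m}$ as a spanning subdigraph of $\stackrel{\olra}{C_m}$ by taking the arcs $v_0v_1,v_1v_2,\dots,v_{m-1}v_0$, then all $m$ of these arcs are asymmetric (their reverses are missing), so $k=m\ge 3$. Theorem \ref{teo3} therefore yields $\rcn(\stackrel{\rightarrow}{C_m})=\src(\stackrel{\rightarrow}{C_m})=m$.

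For the ``only if'' direction, I would let $n=|V(D)|$ and combine the two inequalities noted above. The hypothesis $\src(D)=m$ with the bound $\src(D)\le n$ forces $n\ge m$, while strong connectivity (every vertex has out-degree at least one) forces $m\ge n$. Hence $n=m$ and moreover every vertex has out-degree exactly one; the same counting on in-degrees shows every vertex also has in-degree exactly one. A finite digraph in which every in- and out-degree equals $1$ is a disjoint union of directed cycles, and since $D$ is strongly connected it must be a single directed cycle on $m$ vertices, i.e.\ $D\cong\stackrel{\rightarrow}{C_m}$.

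There is no real obstacle here; the only point that even warrants a line of justification is the structural observation that a connected $1$-regular (in and out) digraph is a directed cycle, which is standard. The substantive content has already been done inside Theorem \ref{teo3}, and the corollary's role is to repackage it as a characterization.
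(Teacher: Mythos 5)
Your proposal is correct and matches the paper's intent: the paper gives no explicit proof, presenting the statement as ``a direct corollary of the previous result,'' i.e.\ of Theorem \ref{teo3}, which is exactly how you obtain the ``if'' direction, while your ``only if'' direction uses only the stated bound $\src(D)\le n$ together with the standard degree-counting argument. Both steps are sound, so nothing further is needed.
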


\section{Circulant digraphs}

For an integer $n\geq2$ and a set $S\subseteq\{1, 2,\dots, n-1\}$, the \emph{circulant digraph} $C_n(S)$ is defined as follows: $V(C_n(S))=\{v_0,v_1,\dots,v_{n-1}\}$ and 
$$A(C_n(S))=\{v_iv_j  :  j-i\stackrel{n}{\equiv}s,\ s\in S\},$$ where $a\stackrel{n}{\equiv}b$ means:  {\it $a$  congruent with $b$ modulo $n$}. 
The elements of $S$ are called \emph{generators}, and an arrow $v_iv_j$,  where $j-i\stackrel{n}{\equiv}s\in S$,  will be called an {\it $s$-jump}. If $s\in S$ we denote by $C_{(s)}$ the spanning subdigraph of $C_n(S)$ induced by all the $s$-jumps. Observe that for every pair of vertices $v_i$ and $v_j$ there is at most one $v_iv_j$-path in $C_{(s)}$. If such $v_iv_j$-path in $C_{(s)}$ exists will be denoted by $v_iC_{(s)}v_j$. From now on the subscripts of the vertices are taken modulo $n$.
Given an integer $k\geq 1$, let  $[k]=\{1,2,\dots,k\}$. 

\begin{teo}\label{Cn[k]}
If $1\leq k\leq n-2$, then $\rcn(C_n([k]))=\src(C_n([k]))=\lceil \frac{n}{k}\rceil$.
\end{teo}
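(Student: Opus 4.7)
The plan is to sandwich $\lceil n/k\rceil$ between $\rcn(C_n([k]))$ and $\src(C_n([k]))$ and then combine with the general inequality $\rcn(D)\le\src(D)$. Write $t=\lceil n/k\rceil$.

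For the lower bound $\rcn(C_n([k]))\ge t$, note first that the distance from $v_a$ to $v_{(a+d)\bmod n}$ is $\lceil d/k\rceil$, so $\diam(C_n([k]))=\lceil (n-1)/k\rceil$. When $n\not\equiv 1\pmod k$ this already equals $t$ and the bound $\rcn\ge\diam$ closes the case. The interesting case is $n=qk+1$: the diameter is then $q=t-1$, and the hypothesis $k\le n-2$ forces $q\ge 2$. Suppose for contradiction that a rainbow $q$-coloring $\rho$ existed. For every $v_a$ the pair $(v_a,v_{(a-1)\bmod n})$ lies at distance $q$, and since $q$ arcs of length at most $k$ sum to $qk=n-1$ only when every arc has length exactly $k$, the geodesic $v_a\to v_{a+k}\to\cdots\to v_{a+qk}$ is unique. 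Setting $c_i=\rho(v_iv_{i+k})$, the rainbow requirement says that for every $a$ the values $c_a,c_{a+k},\ldots,c_{a+(q-1)k}$ (indices mod $n$) are a permutation of $\{1,\ldots,q\}$. Since $\gcd(k,n)=\gcd(k,qk+1)=1$, reindexing the $c_i$ along the Hamiltonian $k$-jump cycle produces a cyclic sequence of length $n$ in which every window of $q$ consecutive terms is a permutation of $\{1,\ldots,q\}$. Comparing two successive windows forces $c'_j=c'_{j+q}$ for all $j$, so $q$ is an exact period, whence $q\mid n=qk+1$ and $q\mid 1$, contradicting $q\ge 2$.

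For the upper bound $\src(C_n([k]))\le t$ I will exhibit an explicit $t$-coloring. Partition the vertex set into blocks $B_\ell=\{v_{\ell k},\ldots,v_{(\ell+1)k-1}\}$ for $\ell=0,\ldots,t-2$ and $B_{t-1}=\{v_{(t-1)k},\ldots,v_{n-1}\}$, and color every arc by the block containing its head: $\rho(v_iv_j)=\lfloor j/k\rfloor$. Any arc either keeps the block of its tail fixed or advances it by one modulo $t$, so along a geodesic the sequence of destination-blocks is weakly increasing mod $t$. Hence to get a rainbow geodesic of length $m\le t$ it is enough that every jump crosses a block boundary: the $m$ destinations then lie in $m$ consecutive blocks cyclically, all distinct because $m\le t$, and ending at $B_{\lfloor b/k\rfloor}$. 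Given a pair $v_a,v_b$ with $d=(b-a)\bmod n$ and $m=\lceil d/k\rceil$, I take every middle jump of size $k$ (which always crosses exactly one boundary) and place the unique short jump of size $d-(m-1)k$ either first or last, depending on whether the first destination must lie in $B_{\lfloor a/k\rfloor}$ or in $B_{\lfloor a/k\rfloor+1\bmod t}$.

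The crux of the argument is certifying that this choice is always available. Computing $\lfloor b/k\rfloor-\lfloor a/k\rfloor\pmod t$ from $d$ and from the position of $v_a$ inside its block shows that this difference always lies in $\{m-1,m\}\pmod t$, which is exactly what is needed to match one of the two options above. The most delicate sub-case is $m=t$ (possible only when $k\mid n$ or the short block $B_{t-1}$ has at least two vertices), where the geodesic must visit every block exactly once. A short additional check handles the one corner that could fail: if $v_a$ is the last vertex of its block, the "stay in $B_{\lfloor a/k\rfloor}$" option is infeasible, but the same congruence analysis shows that precisely in that corner the difference forces the "advance" option, so the construction never stalls.
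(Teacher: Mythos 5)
Your lower bound is sound and is essentially the paper's own argument (when $n\equiv 1\pmod k$ the $t-1$ consecutive $k$-jumps form the unique geodesic between their endpoints, and the resulting periodicity contradicts $\gcd(k,n)=1$), and your colouring is the mirror image of the paper's, which colours each arc by the block of its \emph{tail} and seeks a geodesic whose tails lie in pairwise distinct blocks. The gap is in your upper bound, in the step certifying that one of your two candidate geodesics is always rainbow. The premise that ``any arc either keeps the block of its tail fixed or advances it by one modulo $t$'' is false whenever $k\nmid n$: the last block $B_{t-1}$ has fewer than $k$ vertices, so a $k$-jump whose tail lies near the end of $B_{t-2}$ can leap over $B_{t-1}$ entirely and advance the block index by two. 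This is not a repairable slip of wording, because both of your candidates can fail at once. Take $n=18$, $k=5$, so $t=4$ and $B_0=\{v_0,\dots,v_4\}$, $B_1=\{v_5,\dots,v_9\}$, $B_2=\{v_{10},\dots,v_{14}\}$, $B_3=\{v_{15},v_{16},v_{17}\}$, and consider the pair $v_3,v_1$, with $d=16$, $m=4$ and short jump of size $1$. Short jump last gives $v_3\to v_8\to v_{13}\to v_0\to v_1$ with head-colours $1,2,0,0$; short jump first gives $v_3\to v_4\to v_9\to v_{14}\to v_1$ with head-colours $0,1,2,0$. Neither is rainbow, and since $v_3$ is not the last vertex of its block, the corner case you flag does not catch this. (The colouring itself survives here, since $v_3\to v_7\to v_{11}\to v_{15}\to v_1$ is a rainbow geodesic, but your construction never produces it.)

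What is missing is exactly the content of the paper's Claim 1, whose proof chooses the geodesic more carefully in the wrap-around case: it runs $k$-jumps up to the largest index of the form $(r+t)k+i\le n-1$, then jumps directly to the residue class of the target (to $v_j$, or to $v_{n-1}$ and then $v_j$ when the direct jump is too long), and only then resumes $k$-jumps. In other words, the non-$k$ jump must be allowed to sit in the \emph{middle} of the path, placed so that the short block is actually entered rather than skipped; restricting to ``short first'' or ``short last'' is not enough. To repair your write-up you need either to adopt such a three-segment geodesic (or otherwise redistribute the slack among the jumps, as in the $5,5,4,2$ or $4,4,4,4$ decompositions above) and then verify block-distinctness of the heads, or to carry out honestly the congruence computation you only sketch --- which, as the example shows, does not come out the way you assert.
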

\begin{proof} Let $D = C_n[k]$. The case when $k=1$ is proved in Theorem \ref{teo3}. Let  $2\leq k\leq n-2$, and $V(D) = \{v_0, \dots ,v_{n-1}\}$.  By definition it follows that for every pair $0\leq i \le j\leq n-1$, $d(v_i, v_j) = d(v_0, v_{j-i})$ and $d(v_j,v_i) = d(v_0, v_{i+n-j})$. 
Also it is not hard to see that for every $0\leq i\leq n-1$, $d(v_0, v_i)= \lceil\frac{i}{k}\rceil$. From here it follows that $\diam(D)= \lceil\frac{n-1}{k}\rceil$.  

Let $P=\{V_1, V_2, \dots, V_{\lceil\frac{n}{k}\rceil}\}$ be a partition of $V(D)$ such that for each $i$, with $1\leq i \leq \lfloor\frac{n}{k}\rfloor$, $V_i=\{v_j :(i-1)k\leq j\leq ik-1\}$ and, if $\lceil\frac{n}{k}\rceil \not= \lfloor\frac{n}{k}\rfloor$, $V_{\lceil\frac{n}{k}\rceil} =\{\ v_j : k\lfloor\frac{n}{k}\rfloor\leq j \leq n-1 \}$.

\noindent {\bf Claim 1} For every pair $v_i, v_j\in V(D)$ there is a $v_iv_j$-geodesic path $T$ such that for every $V_p\in P$, $|V_p\cap V(T\setminus v_j)|\leq 1$.

\noindent Let $v_{rk+i}, v_{sk+j}\in V(D)$. If  $r\not =s$ let $0\leq q\leq k-1$ and $t$  be the minimum integer such that $(r+t)k + i +q \stackrel{n}{\equiv} sk+j$  and let $$T=(v_{rk+i}, v_{(r+1)k+i}, \dots,  v_{(r+t)k+i},  v_{(r+t)k+i+q})$$  be a $v_{rk+i}v_{sk+j}$-path.  Since $t$ is minimum and $0\leq q\leq k-1$ it follows that $T$ is a   $v_{rk+i}v_{sk+j}$-geodesic path and, since for every $V_p\in P$, $|V_p|\leq k$, hence for every $V_p\in P$, $|V_p\cap V(T\setminus v_{sk+j})|\leq 1$. 

 If $r=s$ and $i\leq j$ it follows that $v_{rk+i}v_{sk+j}\in A(D)$ and $T=(v_{rk+i}, v_{sk+j})$ is a $v_{rk+i}v_{sk+j}$-geodesic path with the desired properties. So, let us suppose  $i\geq j+1$. Thus $$d(v_{rk+i}, v_{sk+j})=\lceil\frac{n-k(r-s)-(i-j)}{k}\rceil = \lceil\frac{n-(i-j)}{k}\rceil.$$  Let $t$ be the maximum integer such that $(r+t)k+i\leq n-1$.  If $v_{(r+t)k+i}v_j\in A(D)$, then $$T=(v_{rk+i}, v_{(r+1)k+i}, \dots, v_{(r+t)k+i}, v_j, v_{k+j}, \dots v_{sk+j})$$ is a $v_{rk+i}v_{sk+j}$-geodesic path such that for every $V_p\in P$, $|V_p\cap V(T\setminus v_{sk+j})|\leq 1$. 
If $v_{(r+t)k+i}v_j\not\in A(D)$, since $i\geq j+1$ and $t$ is maximum, it follows that $v_{(r+t)k + i}\in V_{\lceil\frac{n}{k}\rceil-1}$ and $v_{(r+t)k + i}v_{n-1}\in A(D)$. Therefore $$T=(v_{rk+i}, \dots, v_{(r+t)k+i}, v_{n-1}, v_j, v_{k+j}, \dots v_{sk+j})$$ 
is a $v_{rk+i}v_{sk+j}$-geodesic path such that for every $V_p\in P$, $|V_p\cap V(T\setminus v_{sk+j})|\leq 1$, and the claim follows.\\

Let $\rho:A(D)\rightarrow \{1, 2, \dots, \lceil\frac{n}{k}\rceil\}$ be the arc-coloring of $D$
defined as follows: for every $v_iv_j\in A(D)$, $\rho(v_iv_j)= p$ if and only if $i\in V_p$. Given $v_i, v_j\in V(D)$, from Claim 1  we see there is a  $v_iv_j$-geodesic path $T$ such that for every $V_i\in P$, $|V_i\cap V(T\setminus v_j)|\leq 1$ which, by definition of $\rho$, is a rainbow path. From here it follows that $\rho$ is a strong rainbow coloring of $D$. Thus, $\src(D)\leq \lceil\frac{n}{k}\rceil$, and since $\diam(D)= \lceil\frac{n-1}{k}\rceil$, for every $n$ such that $\lceil\frac{n}{k}\rceil = \lceil\frac{n-1}{k}\rceil$
 we have $\rcn(D) = \src(D) = \lceil\frac{n}{k}\rceil$. Hence, to end the proof just remain to verify the case $n=kt+1$. Let suppose there is a $t$-rainbow coloring $\rho$ of $D$, and consider $C_{(k)}$,  the spanning subdigraph of $D$ induced by the $k$-jumps.  Since $(k, n=kt+1)=1$ it follows that $C_{(k)}$ is a cycle, and each $v_iv_{i+tk}$-path in $C_{(k)}$ is the only $v_iv_{i+tk}$-path of length $t$ in $D$. Thus, since $\rho$ is a $t$-rainbow coloring, in every $v_iv_{i+tk}$-path in $C_{(k)}$ most appear the $t$ colors. Therefore, for every $0\leq i\leq n-1$, $\rho(v_iv_{i+k}) = \rho(v_{i+kt}v_{i+k(t+1)})$, which, since $(k, n=kt+1)=1$, implies that every arc in $C_{(k)}$ receives the same color which is a contradiction. Therefore $\rcn(D)\geq t+1 = \lceil\frac{n}{k}\rceil$ and since $\src(D)\leq \lceil\frac{n}{k}\rceil$, the theorem follows. \end{proof}

Now, we turn our attention on the circulant digraphs with a pair of generators $\{1, k\}$, with $2\leq k\leq n-1$. Observe that  for every circulant digraph $C_n(\{a_1,a_2\})$, if $(a_1, n )= 1$ and $b\in \Z_n$ is the solution of $a_1x\stackrel{n}{\equiv}1$, then $C_n(\{1,ba_2\}))\cong C_n(\{a_1,a_2\}))$. 
From here, we obtain the following.

\begin{cor}\label{srcC.n.1,2}
For  $k\ge1$, $\rcn(C_{2k+1}(1,k+1))=\src(C_{2k+1}(1,k+1))=k+1$.
\end{cor}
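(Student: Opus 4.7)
The plan is to reduce the corollary to Theorem \ref{Cn[k]} by exhibiting an isomorphism $C_{2k+1}(1,k+1) \cong C_{2k+1}([2])$, and then evaluating the resulting formula. The isomorphism will be supplied directly by the observation stated just before the corollary.

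First I would apply that observation with $n = 2k+1$, $a_1 = k+1$, and $a_2 = 1$. One needs $\gcd(a_1,n) = 1$; this holds because $2k+1 = 2(k+1) - 1$, so any common divisor of $k+1$ and $2k+1$ divides $1$. The multiplicative inverse of $k+1$ modulo $2k+1$ is then visibly $b = 2$, since $2(k+1) = 2k+2 \stackrel{2k+1}{\equiv} 1$. Substituting into the observation gives $C_{2k+1}(\{k+1,1\}) \cong C_{2k+1}(\{1, b\cdot 1\}) = C_{2k+1}(\{1,2\}) = C_{2k+1}([2])$.

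Next I would invoke Theorem \ref{Cn[k]} with the theorem's parameter equal to $2$ and $n = 2k+1$; the hypothesis $2 \le n-2$ holds for $k \ge 2$. The theorem yields
\[
\rcn(C_{2k+1}([2])) \;=\; \src(C_{2k+1}([2])) \;=\; \lceil (2k+1)/2 \rceil \;=\; k+1,
\]
and since isomorphic digraphs have the same rainbow and strong rainbow connection numbers, the corollary follows.

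The only arithmetic content is the observation that multiplication by $2$ in $\mathbb{Z}_{2k+1}$ sends the generating set $\{1,k+1\}$ to $\{2,2k+2\} = \{1,2\}$ up to reordering; this is the whole trick, and I do not anticipate any real obstacle. (For $k=1$ the digraph $C_3(1,2)$ is $\stackrel{\olr}{K_3}$, whose rainbow connection number is $1$ by Theorem \ref{srcKn}(a); the argument above therefore handles cleanly the range $k \geq 2$ for which Theorem \ref{Cn[k]} is applicable.)
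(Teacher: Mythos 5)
Your proof is essentially identical to the paper's: both apply the observation preceding the corollary with $(a_1,a_2)=(k+1,1)$ and inverse $b=2$ to obtain $C_{2k+1}(\{1,k+1\})\cong C_{2k+1}([2])$, and then invoke Theorem~\ref{Cn[k]} to get $\lceil(2k+1)/2\rceil=k+1$. Your parenthetical remark about $k=1$ is well taken and actually sharper than the paper: there $C_3(1,2)=\stackrel{\olr}{K_3}$ has $\rcn=\src=1$ by Theorem~\ref{srcKn}(a), not $k+1=2$, so the stated range $k\ge1$ is too generous and the argument (in both versions) really covers $k\ge2$.
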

\begin{proof}  By Theorem \ref{Cn[k]}, for every $n\geq 4$,  $\rcn(C_{n}([2])=\src(C_{n}([2]))= \lceil\frac{n}{2}\rceil$. Since $(k+1, 2k+1) =1$ and 2  is the solution of $(k+1)x\stackrel{2k+1}{\equiv}1$, then $C_{2k+1}(\{1,k+1\}))\cong C_{2k+1}(\{1, 2\})) = C_{2k+1}([2])$ and the result follows.\end{proof}

 Observe that given any circulant digraph $C_n(\{1,k\})$, for every pair $v_i, v_j \in C_n(\{1,k\})$ we have $d(v_i,v_j)=d(v_0,v_{j-i})$ (where $j-i$ is taken modulo $n$). Thus, $\diam(C_n(\{1,k\}))= max\{ d(v_0, v_i) : v_i\in V(C_{n}(\{1,k\}))\}$.

Given two positive integers $i, k$, let denote as $re(i, k)$ the residue of  $i$ modulo $k$.

\begin{lem}\label{diametro} Let $C_n(\{1,k\})$ be a circulant digraph and $V= \{v_0, \dots, v_{n-1}\}$ it set of vertices .  If  $n\geq (k-1)\lceil\frac{n}{k}\rceil$ then for every $v_i\in V$, $d(v_0, v_i) = \lfloor\frac{i}{k}\rfloor + re(i, k)$. 

Moreover $\diam(C_n(\{1,k\})) = \lfloor\frac{n-1}{k}\rfloor +max\{    re(n-1, k),  k-2\}$.
\end{lem}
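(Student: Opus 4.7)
The plan is to recast $d(v_0,v_i)$ as a one-parameter integer optimization problem. Any directed walk from $v_0$ to $v_i$ in $C_n(\{1,k\})$ uses some $a$ arcs that are $k$-jumps and $b$ arcs that are $1$-jumps with $a,b\geq 0$ and $ak+b\stackrel{n}{\equiv}i$, and has length $a+b$; since any walk contains a path with the same endpoints and no greater length, I get
$$d(v_0,v_i)=\min\{a+b\,:\,a,b\geq 0,\ ak+b\stackrel{n}{\equiv}i\}.$$
Parameterizing the solutions by $ak+b=i+tn$ with $t\geq 0$ and observing that for fixed $t$ the quantity $a+b=(i+tn)-a(k-1)$ is minimized by taking $a$ as large as the constraint $b\geq 0$ allows, the optimum is $a=\lfloor(i+tn)/k\rfloor$, $b=re(i+tn,k)$, so
$$d(v_0,v_i)=\min_{t\geq 0}f(t),\qquad f(t):=\lfloor(i+tn)/k\rfloor+re(i+tn,k).$$

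I then show that $f$ is non-decreasing, which forces $d(v_0,v_i)=f(0)=\lfloor i/k\rfloor+re(i,k)$. Write $n=mk+s$ with $0\leq s<k$ and $i+tn=Q_tk+R_t$ with $0\leq R_t<k$. A short case split on whether $R_t+s<k$ or $R_t+s\geq k$ gives
$$f(t+1)-f(t)=\begin{cases}m+s,&\text{if }R_t+s<k,\\ m+s-k+1,&\text{if }R_t+s\geq k.\end{cases}$$
When $s=0$, only the first case occurs and the difference equals $m\geq 1$ (using $k\leq n-1$). When $s\geq 1$, $\lceil n/k\rceil=m+1$, so the hypothesis $n\geq(k-1)\lceil n/k\rceil$ rewrites as $mk+s\geq(k-1)(m+1)$, i.e.\ $m+s\geq k-1$, which is exactly what is needed to make the second case non-negative. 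Hence $f(t+1)\geq f(t)$ for all $t\geq 0$, proving the first statement.

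For the diameter, the remark preceding the lemma reduces the task to computing $\max_{0\leq i\leq n-1}g(i)$ for $g(i):=\lfloor i/k\rfloor+re(i,k)$. Partitioning $\{0,\dots,n-1\}$ into consecutive blocks $B_j=\{jk,\dots,(j+1)k-1\}$ (with the last block possibly truncated when $k\nmid n$), within a complete block $B_j$ the maximum of $g$ equals $j+(k-1)$, attained at $i=(j+1)k-1$, so the best complete-block value is $(\lfloor n/k\rfloor-1)+(k-1)$ (using $k\leq n-1$ to ensure at least one complete block exists). In the possibly partial final block the maximum is at $i=n-1$, of value $\lfloor(n-1)/k\rfloor+re(n-1,k)$. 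Taking the larger of the two candidates and simplifying separately in the cases $k\mid n$ (no partial block, $re(n-1,k)=k-1$) and $k\nmid n$ (where $\lfloor(n-1)/k\rfloor=\lfloor n/k\rfloor$) gives $\lfloor(n-1)/k\rfloor+\max\{re(n-1,k),k-2\}$ uniformly, as required. The step where the hypothesis actually enters is the algebraic equivalence $m+s\geq k-1\iff n\geq(k-1)\lceil n/k\rceil$ in the second paragraph, and I expect this to be the main obstacle, since without the hypothesis $f$ need not be monotone and ``wrap-around'' solutions with $t\geq 1$ can yield strictly shorter distances, making the formula fail.
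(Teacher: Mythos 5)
Your proof is correct, and it takes a genuinely different route from the paper's. The paper argues by exchange on paths: it fixes a $v_0v_i$-geodesic with the minimum number of $k$-jumps, shows that if it used more than $\lfloor i/k\rfloor$ of them then some $\lceil\frac{n}{k}\rceil$ consecutive $k$-jumps wrap around the cycle and can be traded for $m=k\lceil\frac{n}{k}\rceil-n$ consecutive $1$-jumps, and invokes the hypothesis in the form $m\le\lceil\frac{n}{k}\rceil$ to conclude that this trade does not lengthen the path while decreasing the number of $k$-jumps, a contradiction. You instead reformulate the distance as the integer program $\min\{a+b : a,b\ge 0,\ ak+b\stackrel{n}{\equiv}i\}$, solve it exactly for each wrap count $t$, and prove the resulting function $f$ is non-decreasing; your inequality $m+s\ge k-1$ is exactly the paper's $k\lceil\frac{n}{k}\rceil-n\le\lceil\frac{n}{k}\rceil$ in different clothing, so the hypothesis enters at the same point, but your version avoids path surgery, and it makes transparent both that the hypothesis is vacuous when $k\mid n$ and that it is precisely what rules out shorter ``wrap-around'' routes. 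The diameter computations are essentially identical: the paper exhibits the two extremal vertices $v_{n-1}$ and $v_{k\lfloor\frac{n-1}{k}\rfloor-1}$ and checks nothing beats them, which is your block decomposition spelled out. One small caveat: your optimization step (taking $a$ as large as possible) needs $k\ge 2$ so that $a+b=(i+tn)-a(k-1)$ is decreasing in $a$; this is implicit in the setting but worth stating.
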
 
\begin{proof}  Let $v_i\in V$, $P=(v_0=u_0, u_1\dots, u_s=v_i)$ be a $v_0v_i$-geodesic path with a minimum number of $k$-jumps,  and suppose in $P$ there are  $p$ $k$-jumps and $q$ $1$-jumps.  Also suppose the first $p$ steps of $P$ are $k$-jumps, and the last $q$ are $1$-jumps. Thus $d(v_0, v_i)= p+q$. Since $P$ is geodesic, it follows that $q\leq k-1$ and therefore $p\geq \lfloor\frac{i}{k}\rfloor$. Hence $v_{k\lfloor\frac{i}{k}\rfloor}=u_{\lfloor\frac{i}{k}\rfloor}\in V(P)$ and the subpath $$Q=(v_{k\lfloor\frac{i}{k}\rfloor}=u_{\lfloor\frac{i}{k}\rfloor}\dots, u_j,\dots,u_s=v_i)$$ is a $v_{k\lfloor\frac{i}{k}\rfloor}v_i$-geodesic path with $p'= p-\lfloor\frac{i}{k}\rfloor$ $k$-jumps and $d(v_{k\lfloor\frac{i}{k}\rfloor}, v_i)= p'+q\leq i-k\lfloor\frac{i}{k}\rfloor= re(i, k)$. If $p>\lfloor\frac{i}{k}\rfloor$ then $q< re(i,k)$ and since $re(i, k) < k$, it follows that $p'\geq \lceil\frac{n}{k}\rceil$. Therefore, if $m= k\lceil\frac{n}{k}\rceil-n$, $v_{k\lfloor\frac{i}{k}\rfloor+m}=u_{\lfloor\frac{i}{k}\rfloor+\lceil\frac{n}{k}\rceil}\in V(Q)$ and the subpath 
$$(v_{k\lfloor\frac{i}{k}\rfloor}=u_{\lfloor\frac{i}{k}\rfloor}\dots, u_j,\dots,u_{\lfloor\frac{i}{k}\rfloor+\lceil\frac{n}{k}\rceil}=v_{k\lfloor\frac{i}{k}\rfloor+m})$$ is a $v_{k\lfloor\frac{i}{k}\rfloor}v_{k\lfloor\frac{i}{k}\rfloor+m}$-geodesic path of $\lceil\frac{n}{k}\rceil$ $k$-jumps and $d(v_{k\lfloor\frac{i}{k}\rfloor}, v_{k\lfloor\frac{i}{k}\rfloor+m})= \lceil\frac{n}{k}\rceil\leq m$. Since $n\geq (k-1)\lceil\frac{n}{k}\rceil$ it follows that $\lceil\frac{n}{k}\rceil\geq k\lceil\frac{n}{k}\rceil-n=m$ and therefore $d(v_{k\lfloor\frac{i}{k}\rfloor}, v_{k\lfloor\frac{i}{k}\rfloor+m})= m$. Thus, replacing in $P$ the subpath  $$(v_{k\lfloor\frac{i}{k}\rfloor}=u_{\lfloor\frac{i}{k}\rfloor}\dots, u_j,\dots,u_{\lfloor\frac{i}{k}\rfloor+\lceil\frac{n}{k}\rceil}=v_{k\lfloor\frac{i}{k}\rfloor+m})$$ by the subpath $$(v_{k\lfloor\frac{i}{k}\rfloor},v_{k\lfloor\frac{i}{k}\rfloor+1}, \dots,v_{k\lfloor\frac{i}{k}\rfloor+m})$$ we obtain a $v_0v_i$-geodesic path with less $k$-jumps than $P$, which is a contradiction. Thus $p=\lfloor\frac{i}{k}\rfloor$ and therefore $q=re(i,k)$ which implies that $d(v_0, v_i) =  \lfloor\frac{i}{k}\rfloor + re(i, k)$ and the first part of the result follows.  For the second part, first observe that $d(v_0, v_{n-1})=\lfloor\frac{n-1}{k}\rfloor + re(n-1, k)$ and $d(v_0, v_{(k\lfloor\frac{n-1}{k}\rfloor) -1})=\lfloor\frac{n-1}{k}\rfloor + k-2 $, thus $\diam(C_n(\{1,k\})) \geq \lfloor\frac{n-1}{k}\rfloor +max\{    re(n-1, k),  k-2\}$. If there is $v_i\in  V$ such that $d(v_0, v_i)> \lfloor\frac{n-1}{k}\rfloor + k-2$, it follows that $n-1\geq i\geq k\lfloor\frac{n-1}{k}\rfloor $ but then $d(v_0, v_i)\leq d(v_0, v_{n-1})=\lfloor\frac{n-1}{k}\rfloor + re(n-1, k)$ and the result follows.
\end{proof}

\begin{teo}\label{srcC.2k.1,k,k+1}
For every integer $k\ge 2$\\
\hspace*{1.15cm}$(i)$ $\rcn(C_{2k}(\{1,k\}))=\src(C_{2k}(\{1,k\}))=k$.\\
\hspace*{1cm}$(ii)$ $\rcn(C_{2k}(\{1,k+1\}))=\src(C_{2k}(\{1,k+1\}))=k$.
\end{teo}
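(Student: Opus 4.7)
The plan is to handle (i) and (ii) in parallel, writing $\ell = k$ for (i) and $\ell = k+1$ for (ii). In each case I will first get $\rcn \geq k$ from the diameter and then exhibit a single explicit $k$-arc-colouring that is strongly rainbow-connected, which yields $\src \leq k$ and hence the claimed equality.

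For the lower bound I apply Lemma~\ref{diametro}. For $\ell = k$ the hypothesis $n \geq (k-1)\lceil n/k\rceil$ reads $2k \geq 2(k-1)$; for $\ell = k+1$ the lemma applied with $k$ replaced by $k+1$ needs $2k \geq k\lceil 2k/(k+1)\rceil = 2k$. Both hold, and the diameter formula gives
\[
\diam(D) = \Bigl\lfloor \tfrac{2k-1}{\ell}\Bigr\rfloor + \max\{re(2k-1,\ell),\, \ell-2\} = 1 + (k-1) = k
\]
in both cases. Hence $\rcn(D) \geq k$.

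For the upper bound I propose the uniform recipe
\[
\rho(v_iv_{i+1}) \;=\; \rho(v_iv_{i+\ell}) \;=\; (i \bmod k) + 1,
\]
which uses exactly $k$ colours. To verify that $\rho$ is strongly rainbow-connected, I will, for each ordered pair $(v_s, v_{s+j})$ with $1 \leq j \leq 2k-1$, exhibit a single geodesic whose colour sequence is a block of consecutive residues modulo $k$. For $1 \leq j \leq k-1$ the $j$ consecutive $1$-jumps do the job. For $j = k$: in (i) the lone $k$-jump, coloured $(s\bmod k)+1$; in (ii) the length-$k$ path of $1$-jumps, which by Lemma~\ref{diametro} is a geodesic and uses every colour in $\{1,\dots,k\}$ exactly once. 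For $j = k+r$ with $1 \leq r \leq k-1$, I take in (i) the path $v_sv_{s+1}\cdots v_{s+r}v_{s+j}$ of length $r+1$ (ending with a $k$-jump), and in (ii) the path $v_sv_{s+k+1}v_{s+k+2}\cdots v_{s+j}$ of length $r$ (starting with a $(k+1)$-jump).

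The delicate point is this last case. That the chosen path is a geodesic follows from Lemma~\ref{diametro}, and rainbowness rests on the arithmetic identity $\ell \equiv 0$ or $\ell \equiv 1 \pmod{k}$: this makes the unique $\ell$-jump in the chosen path acquire exactly the colour needed to complete the $r+1$ (resp.\ $r$) consecutive residues already contributed by the $1$-jumps on either side of it. I expect this alignment --- realising that the specific position of the long jump makes the whole colour sequence a contiguous block modulo $k$ --- to be the main obstacle; once it is seen, the rest is straightforward modular bookkeeping. Combining $\src(D) \leq k$ from $\rho$ with $\rcn(D) \geq k$ from the diameter proves both parts.
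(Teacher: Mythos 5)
Your proposal is correct and follows essentially the same route as the paper: the lower bound comes from Lemma~\ref{diametro} giving diameter $k$, and your colouring $(i \bmod k)+1$ on every arc leaving $v_i$ is exactly the paper's colouring by the classes $V_r=\{v_r,v_{r+k}\}$, with the same choice of geodesics in case (i). The only cosmetic difference is in (ii), where the paper notes that $N^+(u)\subseteq V_{r+1}$ for $u\in V_r$, so every path of length at most $k$ is automatically rainbow, which spares the case-by-case bookkeeping you carry out.
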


\begin{proof}   Let $V=\{v_0, \dots, v_{2k-1}\}$ be the set of vertices of $C_{2k}(\{1,k\}$. By Lemma \ref{diametro} we see that  $k=\diam(C_{2k}(\{1,k\}))$ and therefore $k\leq \rcn(C_{2k}(\{1,k\}))$.  Let $\{V_0,\dots,V_{k-1}\}$ be a partition of $V$, where $V_r=\{v_r,v_{r+k}\}$ for $0\le r\le k-1$ and define a $k$-colouring $\rho$ such that for every $0\leq r\leq k-1$, $(u,u')\in\rho^{-1}(r)$ if $u\in V_r$.   Let $v_i, v_j\in V$  and suppose $i+q+pk \stackrel{n}{\equiv} j$ where  $d(v_i, v_j) = p+q$ and $q\leq k-1$.  Observe that, since $q<k$,  $v_iC_{(1)}v_{i+q}C_{(k)}v_{i+pk+q}$ is a rainbow $v_iv_j$-path and by Lemma \ref{diametro} is  $v_iv_j$-geodesic. Therefore $\src(C_{2k}(\{1,k\}))\le k$  and (i) follows.  For (ii), let $V =\{v_0, \dots, v_{2k-1}\}$ be the set of vertices of $C_{2k}(\{1,k+1\})$  and let $\{V_0,\dots,V_{k-1}\}$ as before.  By Lemma  \ref{diametro} it follows that $\diam(C_{2k}(\{1,k+1\}))=k$ which implies $k\leq \rcn(C_{2k}(\{1,k+1\}))$.  Now let $\rho$ be a $k$-colouring such that $(u,u')\in\rho^{-1}(r)$ if $u\in V_r$ .  Since $N^+(u)=V_{r+1}$ for each $u\in V_r$ (taken $r+1$  modulo $k$), it follows that  every path of length at most $k$ is rainbow, in particular every geodesic path is rainbow. Thus  $k\geq \src(C_{2k}(\{1,k+1\}))$ and (ii) follows.
\end{proof}

\begin{teo}
For every integer  $k\ge3$ we have $$\src(C_{(k-1)^2}(\{1,k\}))=\rcn(C_{(k-1)^2}(\{1,k\}))=2k-4.$$
\end{teo}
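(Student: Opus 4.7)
The plan is to establish the two inequalities separately.

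\emph{Lower bound.} Apply Lemma \ref{diametro} with $n=(k-1)^2$. The hypothesis $n\ge(k-1)\lceil n/k\rceil$ holds with equality, since $\lceil n/k\rceil=\lceil k-2+1/k\rceil=k-1$. Writing $n-1=k(k-2)$ gives $\lfloor (n-1)/k\rfloor=k-2$ and $re(n-1,k)=0$, so the lemma yields $\diam(C_{(k-1)^2}(\{1,k\}))=(k-2)+\max(0,k-2)=2k-4$; hence $\rcn\ge \diam=2k-4$.

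\emph{Upper bound.} The case $k=3$ is already covered by Theorem \ref{srcCn}(b), since $C_4(\{1,3\})\cong\stackrel{\olra}{C_4}$ and $\lceil 4/2\rceil=2=2k-4$, so I focus on $k\ge 4$. The plan is to exhibit an explicit strong-rainbow $(2k-4)$-arc-colouring $\rho$. Colour every $1$-jump by $\rho(v_iv_{i+1}):=i\bmod(k-1)$, so that any $k-1$ consecutive $1$-jumps use all $k-1$ colours of the ``$\alpha$-palette'' $\{0,1,\ldots,k-2\}$. Colour the $k$-jumps from a ``$\beta$-palette'' of size $k-1$ overlapping the $\alpha$-palette in exactly two colours, making the total palette of size $2k-4$. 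The $\beta$-colouring should satisfy: (i) for every $i$, the $k-1$ consecutive $k$-jumps $v_iv_{i+k},v_{i+k}v_{i+2k},\ldots,v_{i+(k-2)k}v_{i+k-1}$ use the $k-1$ distinct colours of the $\beta$-palette (so that $(v_i,v_{i+k-1})$ admits a rainbow $k$-jump geodesic of length $k-1$), and (ii) for every $i$, the diameter pair $(v_i,v_{i+n-2})$ admits a rainbow geodesic of length $2k-4$, either a mixed one (with $k-3$ $k$-jumps and $k-1$ $1$-jumps) or the pure $k$-jump geodesic available since $(2k-4)k\equiv n-2\pmod n$.

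For a generic pair $(v_a,v_b)$ at distance $d=p+q$ with $b-a\equiv pk+q\pmod n$ and $0\le q\le k-1$, the plan is to exhibit a rainbow geodesic among the $\binom{p+q}{p}$ interleavings of $p$ $k$-jumps and $q$ $1$-jumps. A useful identity is that in any length-$\ell$ path starting at $v_a$, the $\alpha$-colour of the $1$-jump occurring at position $j$ is $(a+j-1)\bmod(k-1)$, regardless of the interleaving; this reduces the rainbow check to a combinatorial condition on the $\beta$-values appearing at the $k$-jump positions. For shorter distances the many interleavings provide enough slack, and for the diameter case one may fall back on the pure $k$-jump geodesic when every mixed ordering fails.

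\emph{Main obstacle.} The diameter case is tight: in a mixed geodesic of length $2k-4$ the $k-1$ $1$-jumps already cover the full $\alpha$-palette, so each of the $k-3$ $k$-jumps must use a distinct ``fresh'' colour (one of the $k-3$ colours of the $\beta$-palette lying outside the overlap). Combined with constraint (i), which forces the two overlap colours to appear among any $k-1$ consecutive $k$-jumps in the $k$-cycle, and with the constraints coming from intermediate-distance pairs, the $\beta$-colouring cannot be a function of $i\bmod(k-1)$ alone: $\beta_i$ must also depend on the ``row'' $\lfloor i/k\rfloor$ of $v_i$ in the standard partition $V_r=\{v_j:\lfloor j/k\rfloor=r\}$. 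Designing such a $\beta$ so that every pair admits a rainbow geodesic simultaneously is the technically delicate step of the proof.
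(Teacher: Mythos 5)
Your lower bound is correct and coincides with the paper's: Lemma \ref{diametro} applies since $(k-1)^2=(k-1)\lceil (k-1)^2/k\rceil$, and the computation $\diam(C_{(k-1)^2}(\{1,k\}))=(k-2)+\max\{0,k-2\}=2k-4$ is exactly what the paper does. The observation that $k=3$ reduces to the bioriented $C_4$ is a nice (and valid) aside, and your invariant that the $\alpha$-colour of a $1$-jump at position $j$ of any path from $v_a$ is $(a+j-1)\bmod (k-1)$ independently of the interleaving is correct, since both generators are $\equiv 1 \pmod{k-1}$.

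However, the upper bound $\src(C_{(k-1)^2}(\{1,k\}))\le 2k-4$ is not proved: you never define the colouring of the $k$-jumps. You state the constraints a $\beta$-palette would have to satisfy, observe (correctly) that the diameter pairs leave no slack because the $k-1$ $1$-jumps of a mixed geodesic of length $2k-4$ already exhaust the $\alpha$-palette, conclude that $\beta_i$ cannot depend on $i\bmod(k-1)$ alone, and then stop, calling the remaining construction ``the technically delicate step.'' That step is the entire content of the upper bound, so what you have is a plan, not a proof. For comparison, the paper resolves precisely this difficulty by partitioning $V$ into $k-1$ blocks $V_r$ of $k-1$ consecutive vertices, giving all $k$-jumps leaving $V_r$ the single colour $r$ (so the block index, i.e.\ your ``row,'' carries the $k$-jump colour), using $k-3$ fresh colours for most $1$-jumps, and reusing colour $r$ on two designated $1$-jumps per block; it then verifies rainbowness of explicit geodesics, with a separate repair path $Q$ for the tight cases where the canonical geodesic $P$ fails. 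Until you exhibit a concrete $\beta$ (or some other $k$-jump colouring) and check the geodesics --- in particular the tight diameter pairs, where you would need to justify falling back on the pure $k$-jump geodesic of length $2k-4$ being rainbow --- the claimed equality $\src=2k-4$ does not follow from your argument.
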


\begin{proof}  By Lemma \ref{diametro} we see that  $\diam(C_{(k-1)^2}(\{1,k\}))=2k-4$ and therefore $\rcn(C_{(k-1)^2}(\{1,k\}))\geq 2k-4$.  Let $V =\{v_0, \dots , v_{(k-1)^2-1}\}$ be the set of vertices of $C_{(k-1)^2}(\{1,k\})$ and for each $i$, with  $0\leq i < (k-1)^2 $,  identify the vertex $v_i$ with the pair  $\la \lfloor\frac{i}{k-1}\rfloor, re(i, k-1)\ra$.  
Let $\mathcal{V}= \{V_0,\dots,V_{k-2}\}$ be a partition of $V$, where $V_r=\{\la r,s\ra\mid 0\le s\le k-2\}$ for $0\le r\le k-2$, and let $\rho$ be a  $(2k-4)$-colouring defined as follows: For each $r$ with $0\le r\leq k-1$, 
\begin{enumerate}
\item  The arc $(\la r,s\ra\la r+1, s\ra)$, with $0\leq s\leq k-2$, receives color $r$. 
\item  The arcs $(\la r,0\ra\la r,1\ra)$ and $(\la r,k-2\ra\la r+1,0\ra)$ receive colour $r$.
\item The arc $(\la r,s\ra\la r,s+1\ra)$,   with $1\leq s\leq k-3$, receives  colour $k-2+s$.
\end{enumerate}
Observe that every path with length at most $k-1$ in $C_{(k)}$ is  rainbow, and, except for those paths of lenght $k-1$ in $C_{(1)}$ starting at $\la r, 0\ra$ (with  $0\leq r<k-1$),  every path in $C_{(1)}$ with lenght at most $k-1$ is rainbow.  From the structure of $\rho$ we see that to prove $\rho$ is a strong coloring we just need to show that for every $v\in V_0$ and every $w\in V$ there is a rainbow $vw$-geodesic path. 

Let $\la 0, s_0\ra \in V_0$ and $\la r, s\ra\in V_r$. Since $\la 0, s_0\ra = v_{s_0}$ and $\la r, s\ra = v_{r(k-1)+s}$, by Lemma \ref{diametro}, $$d(v_{s_0}, v_{r(k-1)+s}) = \lfloor \frac{(k-1)r+ s-s_0}{k}\rfloor + re((k-1)r+s-s_0, k)$$ (taken $(k-1)r+s -s_0$ modulo $(k-1)^2$).  Thus, if $t= \lfloor \frac{(k-1)r+ s-s_0}{k}\rfloor$, $$P = \la 0, s_0\ra C_{(k)}\la \lfloor\frac{s_0+tk}{k-1}\rfloor , re(s_0+tk, k-1)\ra C_{(1)} \la r, s \ra$$ is a geodesic path. The subpath in $C_{(k)}$ receives colors $j$, with $0\leq j \leq \lfloor \frac{s_0+tk}{k-1}\rfloor -1\leq k-2$, and the subpath in $C_{(1)}$ receives colors $i$. with $k-1\leq i\leq 2k-3$ or $i=\lfloor \frac{s_0+tk}{k-1}\rfloor $. Thus, if  $P$ is not rainbow then we have that: the subpath in $C_{(1)}$ most be of lenght $k-1$; $\la\lfloor\frac{s_0+tk}{k-1}\rfloor , re(s_0+tk, k-1)\ra = \la r-1, 0\ra$ and $\la r, s\ra = \la r, 0\ra$. 

If $r-1= 0$ it follows that $\la 0, s_0\ra = \la 0, 0\ra$ and the path  $Q$ of $k$-jumps $\la 0, 0\ra C_{(k)}\la 1, 0\ra$ of lenght $k-1$ is a geodesic rainbow. If $r-1=1$, $(\la 0, s_0\ra\la 1, 0\ra)$ most be a $k$-jump which is not possible. If $r-1\geq 2$, let $Q$ be the rainbow geodesic obtained by the concatenation of the paths $\la 0, s_0\ra C_{(k)}\la r-3, k-2\ra$ (which receives colors between $0$ and $r-4$); the arcs $(\la r-3, k-2\ra, \la r-2, 0\ra)$  and $(\la r-2, 0\ra, \la r-1, 1\ra)$ (with colors $r-3$ and $r-2$ respectively);  and $\la r-1, 1\ra C_{(1)}\la r, 0\ra$ (which receives the colors $r-1$ and $k-1, \dots, 2k-3$). Hence, $P$  or $Q$ is a $\la 0, s_0\ra\la r, s\ra$-geodesic rainbow, and the theorem follows.\end{proof}

\begin{teo}\label{src.n=ak.rest}
If $n=a_nk$ with $a_n\ge k-1\ge2$, then $$\src(C_n(\{1,k\}))=\rcn(C_n(\{1,k\}))=a_n+k-2.$$
\end{teo}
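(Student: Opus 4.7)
The lower bound is immediate from Lemma~\ref{diametro}. Since $n = a_n k$ and $a_n \ge k-1$, the hypothesis $n \ge (k-1)\lceil n/k\rceil$ becomes $a_n k \ge (k-1) a_n$, which holds trivially. Writing $n-1 = (a_n-1)k + (k-1)$ gives $\lfloor (n-1)/k\rfloor = a_n - 1$ and $re(n-1,k) = k-1 \ge k-2$, so the lemma yields
\[
\diam(C_n(\{1,k\})) = (a_n - 1) + (k-1) = a_n + k - 2,
\]
whence $\rcn(C_n(\{1,k\})) \ge a_n + k - 2$.

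For the matching upper bound I would construct an explicit strong rainbow $(a_n+k-2)$-coloring $\rho$. As in the proof of Theorem~\ref{Cn[k]}, identify each $v_i$ with the pair $\la \lfloor i/k\rfloor, re(i,k)\ra$, and partition $V(C_n(\{1,k\}))$ into the $a_n$ rows $V_r = \{\la r,s\ra : 0 \le s \le k-1\}$ of size $k$. The structural observation to exploit, parallel to Claim~1 in the proof of Theorem~\ref{Cn[k]}, is that any $uv$-geodesic, writing the shift $(r_v - r_u)k + (s_v - s_u)$ (taken modulo $n$) as $pk + q$ with $0 \le q \le k-1$, consists of exactly $p$ $k$-jumps and $q$ $1$-jumps, of which at most one is a \emph{wrap} $(\la r, k-1\ra, \la r+1, 0\ra)$; moreover in the no-wrap case the starting rows of the $p$ $k$-jumps are forced to be the block $r_u, r_u + 1, \dots, r_u + p-1$ (taken modulo $a_n$) independently of the interleaving, while in the wrap case the wrap's placement can be chosen so as to remove any single chosen row from this block.

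Guided by this, split the palette $\{0,\dots,a_n+k-3\}$ into $a_n - 1$ \emph{row colors} reserved for $k$-jumps and $k - 1$ \emph{position colors} reserved for inner $1$-jumps: assign to $(\la r, s\ra, \la r+1, s\ra)$ a row color depending only on $r$, and to $(\la r, s\ra, \la r, s+1\ra)$ (for $s \le k-2$) the position color indexed by $s$. Since there are $a_n$ rows but only $a_n - 1$ row colors available, two of the rows must share a row color; the $k$-jumps out of this \emph{exceptional row} together with the $a_n$ wrap arcs are then assigned colors reusing the existing palette so that the forced sharing is always avoidable. Verification then splits into cases according to whether the forced block $\{r_u,\dots,r_u + p-1\}$ straddles the shared pair of rows and whether a wrap is present: in the easy case either a natural ordering (e.g.\ all inner $1$-jumps first, then all $k$-jumps) produces a rainbow geodesic; otherwise one uses the freedom to reposition the wrap so as to drop one of the offending rows from the $k$-jump block. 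The main obstacle is exactly this delicate choice of which two rows share a row color and how to color the exceptional $k$-jumps and the wrap arcs, so that every one of the ordered pairs $(u,v)$ at every distance admits at least one rainbow geodesic; the hypothesis $a_n \ge k-1$ enters precisely here, as it is what permits Lemma~\ref{diametro} to supply the simple distance formula on which the case analysis is built.
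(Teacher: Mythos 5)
Your lower bound is correct and matches the paper's: with $n=a_nk$ one has $\lfloor\frac{n-1}{k}\rfloor=a_n-1$ and $re(n-1,k)=k-1$, so Lemma~\ref{diametro} gives $\diam(C_n(\{1,k\}))=a_n+k-2$. Your structural description of geodesics ($p$ $k$-jumps and $q$ $1$-jumps with at most one wrap, the $k$-jumps starting at a forced block of consecutive rows, the wrap able to delete one row from that block) is also essentially right and is implicitly what the paper's verification rests on.

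The gap is in the upper bound: you never actually produce the coloring, and you say so yourself (``the main obstacle is exactly this delicate choice''). That obstacle is the entire content of the theorem, so the proposal is a plan rather than a proof. Worse, the palette split you propose --- $a_n-1$ row colors for $k$-jumps plus $k-1$ position colors for inner $1$-jumps, forcing two rows to share a row color --- breaks on a concrete case: for the two sharing rows $r_1<r_2$, the pair $u=\la r_1,s\ra$, $v=\la r_2+1,s\ra$ has $q=0$, so its unique geodesic is the wrap-free all-$k$-jump path through rows $r_1,\dots,r_2$; it contains both shared-color arcs and offers no repositioning freedom. Recoloring the exceptional row's $k$-jumps with position colors just moves the collision to geodesics with $q=k-1$ (which use every position color), and you still have $a_n$ wrap arcs to color out of an exhausted palette. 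The paper resolves this with the opposite split: all $a_n$ row colors $0,\dots,a_n-1$ are kept distinct on the $k$-jumps (so every block of at most $a_n-1$ consecutive $k$-jumps is automatically rainbow), and only $k-2$ extra colors are used for the $1$-jumps, each row's $k$ one-jump arcs receiving those $k-2$ colors plus one carefully placed reuse of a row color (rules 2 and 3 of its definition of $\rho$, with the placement depending on whether $r\ge k-2$ or $r\le k-3$); the verification then exhibits, for each pair where the single reuse could collide, an explicit alternative geodesic such as $\la r,s\ra C_{(k)}\la a_n-1,s\ra C_{(1)}\la 0,s'\ra C_{(k)}\la r,s'\ra$. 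To complete your argument you would need to commit to an explicit assignment at this level of detail and run the case analysis.
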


\begin{proof}
By Lemma \ref{diametro}  we see that $\diam(C_n(\{1,k\})) = a_n + k-2$ and then to prove the result just remain  to show that $\src(C_n(\{1,k\}))\leq a_n+k-2$. Let $V = \{v_0, \dots, v_{n-1}\}$ be the set of vertices of $C_n(\{1,k\})$ and,  for each $i$, with $0\leq i < n $,  identify the vertex $v_i$ with the pair  $\la \lfloor\frac{i}{k}\rfloor, re(i, k)\ra$.  
Let  $\{V_0,\dots,V_{a_n-1}\}$ be a partition of $V$, where $V_r=\{ \la r, s\ra : 0\leq s< k\}$  for $0\le r < a_n$, and let $\rho$ be a  $(a_n+k-2)$-colouring defined as follows: For each $r$, with $0\le r\le a_n-1$, let
\begin{enumerate}
\item The arc $(\la r,s\ra\la r+1,s\ra)$, with  $0\leq s <k$, receives color $r$.
\item If $r\geq k-2$ the arcs $(\la r,0\ra\la r,1\ra)$ and $(\la r,k-1\ra\la r+1,0\ra)$ receive color $r$; and, for  each $1\le j\le k-2$, the arc $(\la r,j\ra\la r,j+1\ra)$  receives color  $a_n-1+j$.
\item If $r\leq k-3$ the arc $(\la r,k-2-r\ra\la r,k-1-r\ra)$ receives color $r$; for each $0\leq j\leq k-3-r$ the arc $(\la r,j\ra\la r,j+1\ra)$ receives color $a_n+r+j$; for each $k-1-r\leq j\leq k-2$ the arc $(\la r,j\ra\la r,j+1\ra)$ receives color $a_n+j-(k-1-r)$; and the arc $(\la r, k-1\ra\la r+1, 0\ra)$ receives color $a_n+r$. 
\end{enumerate}

Observe that for every pair $1\leq r, r' < a_n$ the path $\la r, s\ra C_{(k)}\la r' , s\ra$ is a rainbow path with colors $r, r+1,\dots , r'-1$  (taken the sequence modulo $a_n$). Also every path $P$ of lenght at most $k-1$ in $C_{(1)}$ is rainbow. Moreover, if for some $0\leq r < a_n$, $V(P)\subseteq V_r$ then the colors appearing in $P$ are contained in $\{a_n, \dots, a_n+(k-3)\}\cup \{r\}$; and if $V(P)$ starts at $V_r$ and ends at $V_{r+1}$,  the colors of $P$ are in $\{a_n, \dots, a_n+(k-3)\}\cup \{r, r+1\}$. 

 Let $\la r,s\ra$ and $\la r',s'\ra$ be distinct vertices of $C_n(\{1,k\})$. If $r\neq r'$ it is not hard to see that either $\la r,s\ra C_{(k)}\la r',s\ra C_{(1)}\la r',s'\ra$ (if $s\leq s'$) or  $\la r,s\ra C_{(k)}\la r'-1,s\ra C_{(1)}\la r',s'\ra$ (if $s>  s'$) is a  rainbow $(\la r,s\ra\la r',s'\ra)$-path. If $r=r'$ and $s< s'$ we see that $\la r,s\ra C_{(1)}\la r,s'\ra$ is a rainbow path.  Let us suppose $r=r'$ and $s> s'$.   If no arc  $(\la r,t\ra\la r,t+1\ra)$, with $0\leq t < s'$, receives color $r$, the path $\la r-1,s\ra C_{(1)}\la r,s'\ra$ receive only colors in $\{a_n, \dots, a_n+(k-3)\}\cup \{r-1\}$, and therefore $\la r,s\ra C_{(k)}\la r-1,s\ra C_{(1)}\la r,s'\ra$ is a rainbow path.  If some arc $(\la r,t\ra\la r,t+1\ra)$, with $0\leq t < s'$, receives color $r$, by definition of $\rho$ most be either $(\la r,0\ra\la r,1\ra)$ (if $r\geq k-2$), or $(\la r,k-2-r\ra\la r,k-1-r\ra)$ (if $r\leq k-3$). For the first case in the path  $P=\la r,s\ra C_{(k)}\la a_n-1,s\ra C_{(1)}\la 0,s'\ra C_{(k)}\la r,s'\ra$,  the $k$-jumps receive colors $\{0, \dots, r, \dots, a_n-2\}$  and, by definition of $\rho$, the only $1$-jump of color $0$ is $(\la 0, k-2\ra\la 0, k-1\ra)$. Thus, since  $s'<s\leq k-1$, the colors appearing in  $\la a_n-1,s\ra C_{(1)}\la 0,s'\ra$ are contain in $\{a_n, \dots, a_n+(k-3)\}\cup \{a_n-1\}$ and therefore $P$ is rainbow.   For the second case in the path 	$P=\la r,s\ra C_{(k)}\la k-2-s,s\ra C_{(1)}\la k-1-s,s'\ra C_{(k)}\la r,s'\ra $ the $k$-jumps receive colors $\{0, \dots, k-3-s, k-1-s, \dots, a_n-1\}$ and, since $s>s'>t\geq 0$, $k-1-s\leq k-3$ and therefore the only $1$-jump of color $k-1-s$ is  $(\la k-1-s,  s-1\ra\la k-2-s, s\ra)$. Thus the colors in  $\la k-2-s,s\ra C_{(1)}\la k-1-s,s'\ra$  are contain in $\{a_n, \dots, a_n+(k-3)\}\cup \{k-2-s\}$ and $P$ is a rainbow path.  In all the cases,  from Lemma \ref{diametro}  we see that all the paths  are geodesic and the result follows.  
\end{proof}

\end{document}